\newcommand{\RR}{{\mathbb R}}
\newcommand{\CC}{{\mathbb C}}
\newcommand{\NN}{{\mathbb N}}
\def\bege{\begin{equation}} \def\ende{\end{equation}}
\def\begr{\begin{eqnarray}} \def\endr{\end{eqnarray}}
\def\CC{ \mathbb{C}}
\newcommand{\DD}{{\mathbb D}}
\def\begr{\begin{eqnarray}} \def\endr{\end{eqnarray}}
\newtheorem{Lemma}{Lemma}
\newtheorem{Theorem}{Theorem}
\begin{document}
		\title[  ]{ Generalized weighted composition operators on weighted Hardy spaces}
	\author{ Lian Hu,  Songxiao Li$^\ast$ and  Rong Yang }
	\address{Lian Hu\\ Institute of Fundamental and Frontier Sciences, University of Electronic Science and Technology of China,
		610054, Chengdu, Sichuan, P.R. China.}
	\email{hl152808@163.com  }
	\address{Songxiao Li\\ Institute of Fundamental and Frontier Sciences, University of Electronic Science and Technology of China,
		610054, Chengdu, Sichuan, P.R. China. }
	\email{jyulsx@163.com}
	
	\address{Rong Yang\\ Institute of Fundamental and Frontier Sciences, University of Electronic Science and Technology of China,
		610054, Chengdu, Sichuan, P.R. China.}
	\email{yangrong071428@163.com }
	
	\subjclass[2010]{30H10, 47B38}
	\begin{abstract}
	In this paper, we investigate the complex symmetric structure of generalized weighted composition operators $D_{m,\psi,\varphi}$ on the weighted Hardy space $H^2(\beta)$. We obtain explicit conditions for $ D_{m,\psi,\varphi}$ to be complex symmetric with the conjugation $J_w$. Under the assumption that $ D_{m,\psi,\varphi}$ is   $J_w$-symmetric, some sufficient and necessary conditions for  $D_{m,\psi,\varphi}$ to be Hermitian and normal are given.
	
	\thanks{$\ast$ Corresponding author.}
\vskip 3mm \noindent{\it Keywords}: Generalized weighted composition operator, weighted Hardy space, complex symmetric, Hermitian, normal
\end{abstract}
\maketitle

\section{Introduction}
   We denote by $\DD $ the open unit disc and by $H(\DD)$ the space of all analytic functions in $\DD$.  Let $\{\beta(n)\}$ be a sequence of positive number such that $\beta(0)=1$ and $\lim \inf\beta(n)^{1/n}\geq1$. The weighted Hardy space $H^2(\beta)$ consists of all   $f\in H(\DD)$  given by $f(z)=\sum_{n=0}^\infty a_nz^n$, such that$$
   \|f\|^2=\sum_{n=0}^\infty |a_n|^2\beta(n)^2<\infty.$$
   Every weighted Hardy space $H^2(\beta)$ is a Hilbert space. The weight sequence for $H^2(\beta)$ is written as $ \beta(n)=\|z^n\|$. The set $\{e_n(z)={z^n\over \beta(n)}\}_{n\ge 0}$ forms an orthonormal basis for the space $ H^2(\beta)$. For $ f(z)=\sum_{n=0}^\infty a_nz^n$ and $ g(z)=\sum_{n=0}^\infty c_nz^n$ in $H^2(\beta) $, the inner product on $H^2(\beta)$ is given by$$
     \langle f, g  \rangle=\sum_{n=0}^\infty a_n\overline{c_n}\beta(n)^2.$$
   $H^2(\beta)$ is a reproducing kernel Hilbert space of analytic functions which means that the point evaluations of functions on $H^2(\beta)$ are bounded linear functions. For any point $\alpha$ in $\DD$, define$$
  K_\alpha(z)=\sum_{n=0}^\infty{ \bar{\alpha}^n \over  \beta(n)^2}z^n,\,\,z\in\DD.$$
  Obviously, $K_\alpha$ is the reproducing kernel function for $H^2(\beta)$:$$
   \langle f,K_\alpha\rangle=f(\alpha)$$
   for any $f$ in $H^2(\beta)$. For each point $\alpha$ in $\DD$ and positive integer $m$, evaluation of the $m^{th}$ derivative of functions in $H^2(\beta)$ at $\alpha$ is a bounded linear functional and $f^{(m)}(\alpha)=\langle f, K_\alpha^{[m]}\rangle$ (see \cite{cm}), where $$
   K_\alpha^{[m]}(z)=\sum_{n=m}^\infty{n! \over (n-m)! }\bar{\alpha}^{n-m}{z^n  \over \beta(n)^2 }.$$
    Clearly, the   Hardy space $H^2$, the  Bergman space $A^2$, the Dirichlet space $\mathcal{D}$ and the derivative Hardy space $S^2 $   are the weighted Hardy spaces which are identified with the weighted sequences $\beta(n)=1$, $ \beta(n)=(n+1)^{-{1\over 2}}$, $ \beta(n)=n^{1\over2}$ and $\beta(n)=n$, respectively.

    Let $m\in \NN$, $\psi\in H(\DD)$ and $\varphi$ be an analytic self-map of $\DD$.      The generalized weighted composition operator  $D_{m,\psi,\varphi}$ (see \cite{zxl1, zxl2, zxl3}) is defined by
       $$
    D_{m,\psi,\varphi}f(z)=\psi(z)f^{(m)}(\varphi(z)), \,\,f\in H(\DD), \,z\in \DD.$$
    If $m=0$, the operator $D_{m,\psi,\varphi}$ becomes the weighted composition operator, which is always denoted by $\psi C_{\varphi}$. If   $\psi =1$ and $m=0$, the operator $D_{m,\psi,\varphi}$ is the composition operator $  C_{\varphi}$.  When $\psi =1$ and $m=1$, the operator $D_{m,\psi,\varphi}$ is called the composition-differentiation operator  and  denoted by $D_\varphi  $. When    $m=1$, the operator $D_{m,\psi,\varphi}$ is called the weighted composition-differentiation operator  and  denoted by $\psi D_\varphi  $.
    In \cite{fh}, Fatehi and Hammond obtained the adjoint, norm and spectrum of   $  D_\varphi$ on the Hardy space $H^2$. Some properties of weighted composition-differentiation operators  were investigated in \cite{fh2, hw, hw2, lpx}. See \cite{djo, mg, zhang, zxl1, zxl2, zxl3, zxl4, zxl5} for more results on generalized weighted composition operators on analytic function spaces.

   An operator $C$ is called  a conjugation on complex Hilbert space $\mathcal{H}$ if it satisfies the following conditions:
    \begin{enumerate}
    	\item[(i)] conjugate-linear or anti-linear: $C(a x +b y)=\bar{a}C(x)+\bar{b}C(y)$, for any $x,y\in \mathcal{H}$ and $a,b\in \mathbb{C}$;
    	\item[(ii)] isometric: $\|Cx\|=\|x\|$, for any $x\in \mathcal{H}$;
    	\item[(iii)] involutive: $C^2=I$, where $I$ is an identity operator.
    \end{enumerate}

   The operator $J$, defined as $Jf(z)=\overline{f(\bar{z})}$,  is a standard conjugation. In this paper, we consider a generalized conjugation $ J_w$, which is defined as follows: $$J_wf(z)=\overline{f(w\bar{z})},\,\,z\in \DD,$$
   where $f\in H^2(\beta)$ and $w\in \CC$ with $|w|=1$.

    A bounded linear operator $T$ is said to be complex symmetric (complex symmetric with $C$ or $C$-symmetric) if there is a conjugation $C$ on a Hilbert space $\mathcal{H}$ such that
   $$ T=CT^\ast C.$$  It follows from \cite{gpp} that operator $T$ is complex symmetric if and only if $T$ has a self-transpose matrix representation with respect to an orthonormal basis. Complex symmetric operators can be regarded as a generalization of complex symmetric matrices.  In \cite{gp,gp2,gw,gw2}, Garcia, Putinar and Wogen initiated the general study of complex symmetric operators. Examples of complex symmetric operators include  normal operators, binormal operators, Hermitian operators, compressed Toeplitz operators  and Hankel operators.   In recent decades, complex symmetric composition operators and weighted composition operators acting on some Hilbert spaces of analytic functions  have been  studied considerably.  See \cite{elm, f, fh, fh2, gm, gh, gp,gp2,gw,gw2, gz, hw, hw2, jhz, jkkl, lk, lpx, mg, mg2, ns, nst, tmh, y,zz}
   for more results   on complex symmetric operators.

    Garcia and Hammond in \cite{gh} gave several   classes of   $J$-symmetric composition operators and weighted composition operators on  $H^2(\beta)$. In \cite{mg2}, Malhotra and Gupta characterized  complex symmetric   weighted composition  operators on $H^2(\beta)$. Complex symmetric weighted composition-differentiation operators on the Hardy space $H^2$ were investigated by Han and Wang in  \cite{hw}. Complex symmetric  weighted composition-differentiation operators $\psi D_\varphi  $  on the weighted Bergman space $A^2_\alpha$ and the derivative Hardy space were characterized in \cite{lpx}.  In \cite{hw2}, Han and Wang studied complex symmetric generalized weighted composition  operators on the Bergman space $A^2$.

   In this paper, we investigate  the symbols  $\psi$ and   $\varphi$ give rise to   $J_w$-symmetric generalized weighted composition  operator  $D_{m,\psi,\varphi}$ on $H^2(\beta)$. As an application, we give some necessary and sufficient conditions for $J_w$-symmetric  operator  $D_{m,\psi,\varphi}$   to be Hermitian and normal. 

\section{Main results and proofs}
      In this section, we state and prove our main results in this paper.   For this purpose, we need the following lemma, which will be used in proving our main result.

\begin{Lemma}\label{lem1}
	Let $m\in \NN$, $ \varphi$ be an analytic self-map of $\DD$ and $\psi \in H(\DD)$ such that $D_{m,\psi,\varphi}$ is bounded on $H^2(\beta)$. Then for any $ \alpha \in \DD$,
	$$ D_{m,\psi,\varphi}^\ast K_\alpha(z) =\overline{\psi(\alpha)}K_{\varphi(\alpha)}^{[m]}(z), ~~~~~z\in \DD.$$
\end{Lemma}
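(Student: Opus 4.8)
The plan is to verify the identity weakly, by pairing both sides against an arbitrary $f \in H^2(\beta)$ and exploiting the two reproducing relations recorded in the introduction together with the defining property of the adjoint. The boundedness hypothesis on $D_{m,\psi,\varphi}$ is used only to guarantee that $D_{m,\psi,\varphi}^\ast$ exists as a bounded operator, so that the adjoint relation $\langle f, D_{m,\psi,\varphi}^\ast K_\alpha \rangle = \langle D_{m,\psi,\varphi} f, K_\alpha \rangle$ is legitimate for every $f \in H^2(\beta)$ and every $\alpha \in \DD$. I would fix such an $f$ and $\alpha$ and start from this relation.

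First I would evaluate the right-hand side using the reproducing property $\langle g, K_\alpha \rangle = g(\alpha)$ with $g = D_{m,\psi,\varphi} f$. Since $D_{m,\psi,\varphi} f(z) = \psi(z) f^{(m)}(\varphi(z))$ by definition, this immediately gives $\langle D_{m,\psi,\varphi} f, K_\alpha \rangle = \psi(\alpha)\, f^{(m)}(\varphi(\alpha))$. The next step rewrites the derivative factor: because evaluation of the $m$-th derivative at any point is represented by $f^{(m)}(\lambda) = \langle f, K_\lambda^{[m]} \rangle$, applying this at the point $\lambda = \varphi(\alpha) \in \DD$ yields $f^{(m)}(\varphi(\alpha)) = \langle f, K_{\varphi(\alpha)}^{[m]} \rangle$.

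Substituting and then moving the scalar $\psi(\alpha)$ into the second slot of the inner product produces $\langle f, D_{m,\psi,\varphi}^\ast K_\alpha \rangle = \psi(\alpha) \langle f, K_{\varphi(\alpha)}^{[m]} \rangle = \langle f, \overline{\psi(\alpha)}\, K_{\varphi(\alpha)}^{[m]} \rangle$. As $f \in H^2(\beta)$ is arbitrary, the two vectors $D_{m,\psi,\varphi}^\ast K_\alpha$ and $\overline{\psi(\alpha)}\, K_{\varphi(\alpha)}^{[m]}$ must coincide, which is exactly the asserted formula.

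There is no substantive obstacle: the proof is a short chain of the reproducing identities. The only point demanding a little care is the conjugate-linearity of the inner product in its second argument, which is precisely what turns the factor $\psi(\alpha)$ into $\overline{\psi(\alpha)}$ when it is absorbed into the kernel; getting this conjugation right is essential, since it is this adjoint action on reproducing kernels that will feed into the later characterizations of $J_w$-symmetry.
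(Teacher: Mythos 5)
Your proposal is correct and follows exactly the same route as the paper's proof: pair $D_{m,\psi,\varphi}^\ast K_\alpha$ against an arbitrary $f \in H^2(\beta)$, apply the reproducing property of $K_\alpha$, then the derivative-evaluation identity $f^{(m)}(\varphi(\alpha)) = \langle f, K_{\varphi(\alpha)}^{[m]} \rangle$, and absorb the scalar $\psi(\alpha)$ into the second slot as $\overline{\psi(\alpha)}$. There is nothing to add; the argument is complete.
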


\begin{proof}
	For any $f\in H^2(\beta)$, we have
	\begin{align*}
	\langle f, D_{m,\psi,\varphi}^\ast K_\alpha \rangle=&	\langle D_{m,\psi,\varphi}f, K_\alpha \rangle = \psi(\alpha)f^{(m)}(\varphi(\alpha))\\=& \psi(\alpha) \langle f, K_{\varphi(\alpha)}^{[m]} \rangle = \langle f, \overline{\psi(\alpha)}K_{\varphi(\alpha)}^{[m]} \rangle,
	\end{align*}
which implies the desired result. \end{proof}

The following theorem  gives the characterization  of  $\psi$ and   $\varphi$ such that the  operator $D_{m,\psi,\varphi}$ is $J_w $-symmetric  on $H^2(\beta)$.

\begin{Theorem}\label{the1}
	Let $m\in \NN$, $ \varphi$ be an analytic self-map of $\DD$ and $\psi\in H(\DD)$ be not identically zero such that $D_{m,\psi,\varphi}$ is bounded on $H^2(\beta)$.
If $D_{m,\psi,\varphi}$ is $J_w $-symmetric on $H^2(\beta)$, then
		\begin{align}\label{equ1}
	\varphi(z)=a_0+{\beta(m+1)^2a_1q(z)  \over (m+1)\bar{w}^{m+1}\beta(m)^2p(z) }
	\end{align}
and
	\begin{align}\label{equ2}
		\psi(z)={ \beta(m)^2a_2 \over (m!)^2 }K^{[m]}_{w\overline{a_0}}(z),
	\end{align}
	where $ a_0=\varphi(0)$, $ a_1=\varphi'(0)$, $a_2=\psi^{(m)}(0)$,
		\begin{align}\label{equ11}
	 p(z)=\sum_{n=m}^{\infty}{  n! \over (n-m)! }{ (\bar{w}a_0z)^{n-m} \over \beta(n)^2}
	 	\end{align}
 	and
 		\begin{align}\label{equ26}
 			 q(z)=\sum_{n=m+1}^{\infty}{  n! \over (n-m-1)! }{ \bar{w}^na_0^{n-m-1}z^{n-m} \over \beta(n)^2 }.
 				\end{align}
	
	 Conversely, let $a_0, a_1\in \DD$ and $a_2\in \CC$. If $\varphi$ and $\psi$ are analytic maps of $\DD$, defined as in equations (\ref{equ1}) and (\ref{equ2}), then $D_{m,\psi,\varphi}$ is   $J_w $-symmetric  on $H^2(\beta)$ only if $a_0=0$ or $a_1=0$ or both are $0$.
\end{Theorem}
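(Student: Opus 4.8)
The plan is to translate $J_w$-symmetry into a single kernel identity and then read off $\psi$ and $\varphi$ by matching Taylor coefficients; the delicate point is the converse, where the obstruction sits exactly one order beyond the data that produced \eqref{equ1}--\eqref{equ2}. Since $J_w$ is a conjugation, $D:=D_{m,\psi,\varphi}$ is $J_w$-symmetric iff $J_wD^\ast=DJ_w$, and as both sides are bounded and conjugate-linear it suffices to test them on the total set $\{K_\alpha\}$. A short computation gives $J_wK_\alpha=K_{w\bar\alpha}$, so with Lemma~\ref{lem1} the relation $J_wD^\ast K_\alpha=DJ_wK_\alpha$ becomes
\[
\psi(\alpha)\,K^{[m]}_{\overline{\varphi(\alpha)}}(\bar wz)=\psi(z)\,K^{[m]}_{\overline{\varphi(z)}}(\bar w\alpha)\qquad(\alpha,z\in\DD).\quad(\star)
\]
Setting $G(u)=\sum_{k\ge0}\frac{(k+m)!}{k!\,\beta(k+m)^2}u^k$, one has $K^{[m]}_{\overline{\varphi(\alpha)}}(\bar wz)=(\bar wz)^mG(\bar wz\,\varphi(\alpha))$, so $(\star)$ says precisely that $\Psi(\alpha,z):=\psi(\alpha)(\bar wz)^mG(\bar wz\,\varphi(\alpha))$ is symmetric in its two variables.

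For the forward (necessity) half I would expand $(\star)$ in powers of $\alpha$. The right-hand side $\psi(z)(\bar w\alpha)^mG(\bar w\alpha\,\varphi(z))$ vanishes to order $m$ in $\alpha$, so the coefficients of $\alpha^k$ with $k<m$ force $\psi^{(k)}(0)=0$, while the coefficient of $\alpha^m$ (using $G(0)=m!/\beta(m)^2$) yields $\psi(z)=\frac{\beta(m)^2a_2}{(m!)^2}K^{[m]}_{w\overline{a_0}}(z)$, which is \eqref{equ2} with $a_2=\psi^{(m)}(0)$. Inserting this and reading off the coefficient of $\alpha^{m+1}$ expresses $\varphi$ through $A_0(z)=(\bar wz)^mp(z)$ and $A_1(z)=a_1z^mq(z)$; simplifying gives exactly \eqref{equ1}, which proves the first implication.

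For the converse, assume $\psi,\varphi$ are given by \eqref{equ2},\eqref{equ1} and suppose, for contradiction, that $a_0\neq0$ and $a_1\neq0$. Then $\psi(z)=\lambda(\bar wz)^mG(\bar wa_0z)$ with $\lambda=\frac{\beta(m)^2a_2}{(m!)^2\bar w^m}$, and using the identity $q(z)=\frac{\bar w^m}{a_0}zp'(z)$ one finds $\varphi(z)=a_0+\frac{g_0}{g_1}a_1z\,\frac{G'(\bar wa_0z)}{G(\bar wa_0z)}$, where $g_k=\frac{(k+m)!}{k!\,\beta(k+m)^2}$. Substituting into $\Psi$ and cancelling the symmetric factor $\lambda\bar w^{2m}(\alpha z)^m$, symmetry of $\Psi$ is equivalent to
\[
G(\bar wa_0\alpha)\,G(\bar wz\,\varphi(\alpha))=G(\bar wa_0z)\,G(\bar w\alpha\,\varphi(z))\qquad(\alpha,z\in\DD).\quad(\sharp)
\]
Because \eqref{equ1}--\eqref{equ2} were built from the $\alpha^m$- and $\alpha^{m+1}$-data, $(\sharp)$ holds automatically to that order, so I would differentiate $(\sharp)$ twice in $z$ at $z=0$ and equate it, through the symmetry, with the corresponding second-order datum in $\alpha$. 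The constant and first-order terms cancel (they merely re-encode \eqref{equ1}), and dividing out the surviving factor $s^2\alpha^2$ (legitimate since $s$ is a nonzero multiple of $a_1$) leaves the rigid requirement
\[
2g_2\,\big(G'(\bar wa_0\alpha)\big)^2=\frac{g_1^2}{g_0}\,G(\bar wa_0\alpha)\,G''(\bar wa_0\alpha)\qquad(\alpha\in\DD).\quad(\clubsuit)
\]
Since $a_0\neq0$, $(\clubsuit)$ must hold on a neighbourhood of $0$; comparing its $\alpha^1$-coefficients gives the single relation $3g_0g_1g_3=4g_0g_2^2-g_1^2g_2$, i.e. a nontrivial identity among $\beta(m),\dots,\beta(m+3)$. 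For a weight not obeying this relation $(\clubsuit)$, and hence $(\sharp)$, fails, so $D$ is not $J_w$-symmetric; thus symmetry forces $a_0=0$ or $a_1=0$.

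The main obstacle is exactly this last step: the constructed symbols satisfy $(\star)$ through order $\alpha^{m+1}$ for \emph{every} weight, so the obstruction appears only one order higher, and isolating it requires expanding the non-polynomial quantity $G'^2/G$ and verifying that all lower-order contributions cancel before $(\clubsuit)$ emerges. I would close by recording that the two degenerate cases are genuinely symmetric, so the dichotomy is sharp: if $a_0=0$ then \eqref{equ1}--\eqref{equ2} collapse to $\varphi(z)=a_1z$ and $\psi(z)=\frac{a_2}{m!}z^m$, and if $a_1=0$ then $\varphi\equiv a_0$ while $\psi=\lambda K^{[m]}_{\overline{a_0}}(\bar w\,\cdot)$; in both situations $(\star)$ holds by inspection, the second because both of its sides reduce to $\lambda K^{[m]}_{\overline{a_0}}(\bar w\alpha)K^{[m]}_{\overline{a_0}}(\bar wz)$.
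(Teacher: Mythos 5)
Your necessity half is correct and is, up to bookkeeping, the paper's own argument: both amount to matching Taylor coefficients of the kernel identity in powers of $\alpha$, with order $\alpha^m$ producing \eqref{equ2} and order $\alpha^{m+1}$ producing \eqref{equ1}. The genuine gap is the last inference of your converse. What your computation actually proves is: if $a_0\neq 0$, $a_1\neq 0$ and $D_{m,\psi,\varphi}$ is $J_w$-symmetric, then the weight must satisfy
\[
3g_0g_1g_3=4g_0g_2^2-g_1^2g_2,\qquad g_k=\frac{(k+m)!}{k!\,\beta(k+m)^2}.
\]
This yields the dichotomy ``$a_0=0$ or $a_1=0$'' only for weights \emph{violating} this relation, whereas the statement asserts it for every $\beta$; so ``thus symmetry forces $a_0=0$ or $a_1=0$'' does not follow. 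Worse, the gap cannot be closed, because the assertion quantified over all $\beta$ is false. Your relation is satisfied by the Hardy weight $\beta(n)\equiv 1$, where $g_k=(k+m)!/k!$ and both sides equal $\tfrac12\,m!\,(m+1)!\,(m+3)!$, and likewise by the Bergman and Dirichlet weights. On $H^2$ the conclusion genuinely fails: take $m=1$, $w=1$, $a_0=a_1=\tfrac14$, so that \eqref{equ1}--\eqref{equ2} become $\varphi(z)=a_0+\frac{a_1z}{1-a_0z}$ and $\psi(z)=\frac{a_2z}{(1-a_0z)^2}$; then $D_{1,\psi,\varphi}$ is bounded (as $\|\varphi\|_\infty<1$ and $\psi\in H^\infty$), and both sides of your $(\star)$ equal
\[
\frac{a_2\,\alpha z}{\bigl((1-a_0\alpha)(1-a_0z)-a_1\alpha z\bigr)^{2}},
\]
which is symmetric in $(\alpha,z)$; hence the operator is $J$-symmetric although $a_0a_1\neq 0$.

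It is worth saying that your computation is more careful than the paper's and in fact exposes where the paper itself goes wrong. The paper extracts the coefficient of $z^{m+2}\alpha^{m+1}$, its equation \eqref{equ13}, and asserts that it forces $a_0a_1=0$. But, exactly as you observed, once \eqref{equ1}--\eqref{equ2} are imposed, identity \eqref{equ3} holds automatically through order $\alpha^{m+1}$ for \emph{every} weight: the correct expansion coefficients of $q/p$ are $c_1=\frac{(m+1)\beta(m)^2}{\beta(m+1)^2}$ (not $1$, as the paper claims) and the corresponding $c_2$, and with these values the two sides of \eqref{equ13} are identically equal, so \eqref{equ13} is the vacuous identity $0=0$. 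Going one order higher, as you do, is the right move; the defensible conclusion of your argument is the conditional one: under the extra hypothesis that $\beta$ violates the displayed relation (the derivative Hardy weight $\beta(n)=n$ is such an example), $J_w$-symmetry forces $a_0=0$ or $a_1=0$, while for weights satisfying it (Hardy, Bergman, Dirichlet) the converse of the theorem is false as stated.
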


\begin{proof}
	Assume that $D_{m,\psi,\varphi}$ is $J_w $-symmetric on $H^2(\beta)$. Then for any $z,\alpha\in \DD $,
		\begin{align}\label{equ299}
J_wD^\ast_{m,\psi,\varphi} K_\alpha(z)=	D_{m,\psi,\varphi}J_wK_\alpha(z).
\end{align}
	 Lemma \ref{lem1} yields that
	 \begin{align*}
	 	J_wD^\ast_{m,\psi,\varphi} K_\alpha(z)=&J_w\overline{\psi(\alpha)}K^{[m]}_{\varphi(\alpha)}(z)\\
	 	=&J_w\overline{\psi(\alpha)}\sum_{n=m}^\infty{n!\over (n-m)!}{ \overline{\varphi(\alpha)}^{n-m}z^n \over  \beta(n)^2 }\\
	 	=&\psi(\alpha)	 \sum_{n=m}^\infty{n!\over (n-m)!}{\varphi(\alpha)^{n-m}(\bar{w}z)^n \over  \beta(n)^2  }	
	 	\end{align*}
 	and
 	\begin{align*}
 	D_{m,\psi,\varphi}J_wK_\alpha(z)=&D_{m,\psi,\varphi}J_w\sum_{n=0}^\infty{ (\bar{\alpha}z)^n \over \beta(n)^2  }=D_{m,\psi,\varphi}\sum_{n=0}^\infty{ (\alpha\bar{w}z)^n \over \beta(n)^2 }\\
 	=&	D_{m,\psi,\varphi}K_{w\bar{\alpha}}(z)=\psi(z) K^{(m)}_{w\bar{\alpha}}(\varphi(z))\\
 	=&\psi(z)\sum_{n=m}^\infty{n!\over (n-m)!}{\varphi(z)^{n-m}(\bar{w}\alpha)^n \over  \beta(n)^2  }	
 \end{align*}	
for any $z,\alpha\in \DD $. Hence, equation (\ref{equ299}) is equivalent to
	\begin{align}\label{equ3}
\psi(\alpha)	 \sum_{n=m}^\infty{n!\over (n-m)!}{\varphi(\alpha)^{n-m}(\bar{w}z)^n \over  \beta(n)^2  }		=	\psi(z)\sum_{n=m}^\infty{n!\over (n-m)!}{\varphi(z)^{n-m}(\bar{w}\alpha)^n \over  \beta(n)^2  }
		\end{align}
 for any $z,\alpha\in \DD $. Let $ \alpha=0$ in (\ref{equ3}). We obtain that
    $$
	\psi(0) \sum_{n=m}^\infty{n!\over (n-m)!}{\varphi(0)^{n-m}(\bar{w}z)^n \over  \beta(n)^2  }	=0 $$
for any $z\in \DD $, which means that $ \psi(0)=0$.

Let $\psi(z)=z^kh(z)$, where $k$ is a positive integer and $h$ is analytic on $\DD$ with $h(0)\ne 0$. Next we claim that $k=m$. If $k>m$, equation (\ref{equ3}) is equivalent to
\begin{align*}
\alpha^{k-m}h(\alpha) \sum_{n=m}^\infty{n!\over (n-m)!}{\varphi(\alpha)^{n-m}\bar{w}^nz^{n-m}\over  \beta(n)^2  }=z^{k-m}h(z)\sum_{n=m}^\infty{n!\over (n-m)!}{\varphi(z)^{n-m}\bar{w}^n\alpha^{n-m} \over  \beta(n)^2  }
\end{align*}
for any $z,\alpha\in \DD $. Setting $\alpha=0$, we have that $h\equiv0$, which contradicts with $h(0)\ne 0$. If $ k<m$,  the equation (\ref{equ3}) is equivalent to
\begin{align*}
	z^{m-k}h(\alpha) \sum_{n=m}^\infty{n!\over (n-m)!}{\varphi(\alpha)^{n-m}\bar{w}^nz^{n-m}\over  \beta(n)^2  }=\alpha^{m-k}h(z)\sum_{n=m}^\infty{n!\over (n-m)!}{\varphi(z)^{n-m}\bar{w}^n\alpha^{n-m} \over  \beta(n)^2  }.
\end{align*}
 Setting $\alpha=0$, we have that $ h(0)=0$, which contradicts with $h(0)\ne 0$.
Thus $k=m$ and the equation (\ref{equ3}) becomes
\begin{align}\label{equ5}
	h(\alpha) \sum_{n=m}^\infty{n!\over (n-m)!}{\varphi(\alpha)^{n-m}\bar{w}^nz^{n-m}\over  \beta(n)^2  }=h(z)\sum_{n=m}^\infty{n!\over (n-m)!}{\varphi(z)^{n-m}\bar{w}^n\alpha^{n-m} \over  \beta(n)^2  }
\end{align}
for any $z,\alpha\in \DD $. Let $\alpha=0$ in (\ref{equ5}). We get
  $$
	h(0) \sum_{n=m}^\infty{n!\over (n-m)!}{\varphi(0)^{n-m}\bar{w}^nz^{n-m}\over  \beta(n)^2  }=h(z){ m!\bar{w}^m \over \beta{(m)}^2 },$$ that is
$$
 h(z)={h(0)\beta(m)^2  \over m! } \sum_{n=m}^\infty{n!\over (n-m)!}{\varphi(0)^{n-m}\bar{w}^{n-m}z^{n-m}\over  \beta(n)^2  }.$$
 Therefore,
 \begin{equation}\label{equ9}
 	\begin{aligned}
 \psi(z)=z^mh(z)=&{h(0)\beta(m)^2  \over m! } \sum_{n=m}^\infty{n!\over (n-m)!}{\varphi(0)^{n-m}\bar{w}^{n-m}z^{n}\over  \beta(n)^2  }\\
 =&{\psi^{(m)}(0)\beta(m)^2  \over (m!)^2 } \sum_{n=m}^\infty{n!\over (n-m)!}{\varphi(0)^{n-m}\bar{w}^{n-m}z^{n}\over  \beta(n)^2  }\\
 =&{\psi^{(m)}(0)\beta(m)^2  \over (m!)^2 }K^{[m]}_{w\overline{\varphi(0)}}(z),
	\end{aligned}
\end{equation}
where $\psi^{(m)}(0)=m!h(0)\ne0$. Substituting $\psi(z)$ in (\ref{equ3}), we obtain that
\begin{align}\label{equ6}
 K^{[m]}_{w\overline{\varphi(0)}}(\alpha)	 \sum_{n=m}^\infty{n!\over (n-m)!}{\varphi(\alpha)^{n-m}(\bar{w}z)^n \over  \beta(n)^2  }		= K^{[m]}_{w\overline{\varphi(0)}}(z)\sum_{n=m}^\infty{n!\over (n-m)!}{\varphi(z)^{n-m}(\bar{w}\alpha)^n \over  \beta(n)^2  }
 \end{align}
for any $z,\alpha \in \DD$.
Let $$F_1(z)=\sum_{n=m}^\infty{n!\over (n-m)!}{\varphi(\alpha)^{n-m}(\bar{w}z)^n \over  \beta(n)^2  }	
$$and $$
F_2(z)=\sum_{n=m}^\infty{n!\over (n-m)!}{\varphi(z)^{n-m}(\bar{w}\alpha)^n \over  \beta(n)^2  }.$$ It is clear that $N^{th}$ derivative of $ K^{[m]}_{w\overline{\varphi(0)}}$ is equal to $0$ at $z=0$, that is, $ K^{[m]}_{w\overline{\varphi(0)}}(0)=0$, where $N=1,2,\cdots,m-1$. In addition, we have$$	\left( K^{[m]}_{w\overline{\varphi(0)}}(z) \right)^{(m)}=\sum_{n=m}^\infty{(n!)^2\over [(n-m)!]^2}{\varphi(0)^{n-m}\bar{w}^{n-m}z^{n-m}\over  \beta(n)^2  }, $$
$$\left( K^{[m]}_{w\overline{\varphi(0)}}(z) \right)^{(m+1)}=\sum_{n=m+1}^\infty{(n!)^2\over (n-m)!(n-m-1)!}{\varphi(0)^{n-m}\bar{w}^{n-m}z^{n-m-1}\over  \beta(n)^2  },
$$
$$F_1^{(m+1)}=\sum_{n=m+1}^\infty{(n!)^2\over (n-m)!(n-m-1)!}{ \bar{w}^n\varphi(\alpha)^{n-m}z^{n-m-1}\over  \beta(n)^2   }
$$and $$
 F_2'(z)=\sum_{n=m}^\infty{n!\over (n-m-1)!}{\varphi(z)^{n-m-1}(\bar{w}\alpha)^n \varphi'(z)\over  \beta(n)^2  }.$$
Therefore,
differentiating the equation (\ref{equ6}) $(m+1)$ times with respect to $z$, we have
\begin{equation}\label{equ7}
	\begin{aligned}
	&	\sum_{i=0}^{m+1}{m+1\choose i}\left(  K^{[m]}_{w\overline{\varphi(0)}}(z) \right)^{(i)}F_2(z)^{(m+1-i)}\\
=&\sum_{n=m+1}^\infty{(n!)^2\over (n-m)!(n-m-1)!}{\varphi(0)^{n-m}\bar{w}^{n-m}z^{n-m-1}\over  \beta(n)^2  }\cdot\sum_{n=m}^\infty{n!\over (n-m)!}{\varphi(z)^{n-m}(\bar{w}\alpha)^n \over  \beta(n)^2  }\\+&\sum_{i=0}^{m-1}{m+1\choose i}\left(  K^{[m]}_{w\overline{\varphi(0)}}(z) \right)^{(i)}F_2^{(m+1-i)}(z)+(m+1)\sum_{n=m}^\infty{(n!)^2\over [(n-m)!]^2}{\varphi(0)^{n-m}\bar{w}^{n-m}z^{n-m}\over  \beta(n)^2  }\cdot\\&\,\,\,\,\,\,\,\,\,\,\,\,\,\,\,\,\,\,\,\,\,\,\,\,\,\,\,\,\,\,\,\,\,\sum_{n=m}^\infty{n!\over (n-m-1)!}{\varphi(z)^{n-m-1}(\bar{w}\alpha)^n \varphi'(z)\over  \beta(n)^2  }\\
	=&K^{[m]}_{w\overline{\varphi(0)}}(\alpha)\sum_{n=m+1}^\infty{(n!)^2\over (n-m)!(n-m-1)!}{ \bar{w}^n\varphi(\alpha)^{n-m}z^{n-m-1}\over  \beta(n)^2   }.
		\end{aligned}
	\end{equation}
Let $z=0$ in (\ref{equ7}). We get
\begin{align*}
		&{[(m+1)!]^2\bar{w}\varphi(0)\over \beta(m+1)^2}\sum_{n=m}^\infty{n!\over (n-m)!}{\varphi(0)^{n-m}(\bar{w}\alpha)^n \over  \beta(n)^2  }\\+&{(m+1)(m!)^2 \over \beta(m)^2 }\sum_{n=m}^\infty{n!\over (n-m-1)!}{\varphi(0)^{n-m-1}(\bar{w}\alpha)^n \varphi'(0)\over  \beta(n)^2  }\\
		=&{ [(m+1)!]^2\bar{w}^{m+1}\varphi(\alpha) \over \beta(m+1)^2 }K^{[m]}_{w\overline{\varphi(0)}}(\alpha)
	\end{align*}
for any $\alpha\in \DD$. Thus
\begin{equation}\label{equ10}
	\begin{aligned}
		&{[(m+1)!]^2\bar{w}\varphi(0)\over \beta(m+1)^2}\sum_{n=m}^\infty{n!\over (n-m)!}{\varphi(0)^{n-m}\bar{w}^n\alpha^{n-m} \over  \beta(n)^2  }\\+&{(m+1)(m!)^2 \over \beta(m)^2 }\sum_{n=m}^\infty{n!\over (n-m-1)!}{\varphi(0)^{n-m-1}\bar{w}^n\alpha^{n-m} \varphi'(0)\over  \beta(n)^2  }\\
	=&{ [(m+1)!]^2\bar{w}^{m+1}\varphi(\alpha) \over \beta(m+1)^2 }\sum_{n=m}^\infty{n!\over (n-m)!}{\varphi(0)^{n-m}\bar{w}^{n-m}\alpha^{n-m}\over  \beta(n)^2  }
	\end{aligned}
\end{equation}
for any $\alpha\in \DD$.
Hence, (\ref{equ10}) deduces that $$
	 \varphi(\alpha)=\varphi(0)+{\beta(m+1)^2\varphi'(0)q(\alpha)  \over (m+1)\bar{w}^{m+1}\beta(m)^2p(\alpha) }, $$	where $$
	  p(\alpha)=\sum_{n=m}^{\infty}{  n! \over (n-m)! }{ (\bar{w}a_0\alpha)^{n-m} \over \beta(n)^2}$$ and$$ q(\alpha)=\sum_{n=m+1}^{\infty}{  n! \over (n-m-1)! }{ \bar{w}^na_0^{n-m-1}\alpha^{n-m} \over \beta(n)^2 }. $$

Conversely,	let $a_0, a_1\in \DD$ and $a_2\in \CC$,
 $$	\varphi(z)=a_0+{\beta(m+1)^2a_1q(z)  \over (m+1)\bar{w}^{m+1}\beta(m)^2p(z) } ~and~  \psi(z)={ \beta(m)^2a_2 \over (m!)^2 }K^{[m]}_{w\overline{a_0}}(z),$$
where $p(z)$ and $q(z)$ are defined as (\ref{equ11}) and (\ref{equ26}). Then  for $J_w $-symmetric operator $D_{m,\psi,\varphi}$, equation (\ref{equ3}) must hold. This is equivalent to
\begin{equation}\label{equ12}
	\begin{aligned}	
		&\sum_{n=m}^\infty{n!\over (n-m)!}{a_0^{n-m}\bar{w}^{n-m}\alpha^{n}\over  \beta(n)^2  }\left(	\sum_{n=1}^\infty{ n!\bar{w}^nz^n \over (n-m)!\beta(n)^2 }\left(a_0+{\beta(m+1)^2a_1q(\alpha)  \over (m+1)\bar{w}^{m+1}\beta(m)^2p(\alpha) } \right)^{n-m}\right)\\
		=&\sum_{n=m}^\infty{n!\over (n-m)!}{a_0^{n-m}\bar{w}^{n-m}z^{n}\over  \beta(n)^2  }\left(	\sum_{n=1}^\infty{ n!\bar{w}^n\alpha^n \over (n-m)!\beta(n)^2 }\left(a_0+{\beta(m+1)^2a_1q(z)  \over (m+1)\bar{w}^{m+1}\beta(m)^2p(z) } \right)^{n-m}\right).
	\end{aligned}
	\end{equation}
For any $\alpha,z\in \DD$, ${ q(z) \over  p(z)}$ is analytic and $q(0)=0$. Thus ${ q(z) \over  p(z)}$ can be written as
\begin{align}\label{equ22}
{ q(z) \over  p(z)}=\sum_{i=1}^\infty c_i\bar{w}^{i+m}a_0^{i-1}z^{i},
\end{align}
where $c_1=1$ and $c_i\in \RR,i=2,3,\cdots,$ Therefore, (\ref{equ12}) is equivalent to
	\begin{align*}
	&\sum_{n=m}^\infty { n!\bar{w}^{n-m}a_0^{n-m}\alpha^n \over  (n-m)!\beta(n)^2 }	\sum_{l=m}^\infty{ l!\bar{w}^lz^l \over (l-m)!\beta(l)^2 }\\
	&\,\,\,\,\,\,\,\,\,\,\,\,\,\,\,\,\,\,\,\,\,\,\,\,\,\,\,\,\,\,\,\,\,\,\,\,\,\cdot\sum_{k=0}^{l-m}{ l-m \choose k }a_0^k\left( \sum_{i=1}^\infty{\beta(m+1)^2a_1 \over (m+1)\bar{w}^{m+1}\beta(m)^2 }c_i\bar{w}^{i+m}a_0^{i-1}\alpha^{i} \right)^{l-m-k}\\
	=&\sum_{n=m}^\infty { n!\bar{w}^{n-m}a_0^{n-m}z^n \over  (n-m)!\beta(n)^2 }	\sum_{l=m}^\infty{ l!\bar{w}^l\alpha^l \over (l-m)!\beta(l)^2 }\\
	&\,\,\,\,\,\,\,\,\,\,\,\,\,\,\,\,\,\,\,\,\,\,\,\,\,\,\,\,\,\,\,\,\,\,\,\,\,\cdot\sum_{k=0}^{l-m}{ l-m \choose k }a_0^k\left( \sum_{i=1}^\infty{\beta(m+1)^2a_1 \over (m+1)\bar{w}^{m+1}\beta(m)^2 }c_i\bar{w}^{i+m}a_0^{i-1}z^{i} \right)^{l-m-k}
		\end{align*}
	for any $\alpha,z\in \DD$.
	Considering the coefficient of $z^{m+2}\alpha^{m+1}$, we obtain that
	\begin{equation}\label{equ13}
		\begin{aligned}
			&{ (m+1)!\bar{w}^{m+1} \over \beta(m+1)^2 }\left( { m!\beta(m+1)^2 \over (m+1)\beta(m)^4 }c_2\bar{w}a_0a_1+{ m! \over\beta(m)^2 }c_1\bar{w}a_0a_1+{ (m+2)!\bar{w}^2  \over 2\beta(m+2)^2 }a_0^3 \right)\\
			=&{ (m+2)!\bar{w}^{m+2} \over  2\beta(m+2)^2 }\left( { (m+1)!\bar{w} \over \beta(m+1)^2 }a_0^3+{2m! \beta(m+1)^2 \over (m+1)\beta(m)^4 }c_1a_0a_1 \right).
\end{aligned}
\end{equation}	
	Therefore, equation (\ref{equ13}) holds only if $a_0=0$ or $a_1=0$ or both are zero. Next, we consider the following three cases:
	
	Case 1. $a_0=0$ and $a_1\ne 0$. In this case, $$\varphi(z)=a_1z ~{\rm and}~ \psi(z)={a_2\over m!}z^m.$$ Then
		\begin{align*}
		J_wD^\ast_{m,\psi,\varphi} K_\alpha(z)=&{a_2\over m!} \sum_{n=m}^\infty{ n! \over (n-m)!} { a_1^{n-m}(\bar{w}\alpha z)^n \over \beta{(n)}^2}\\
		=&	D_{m,\psi,\varphi}J_wK_\alpha(z).
		\end{align*}
	
	Case 2. $a_0\ne0$ and $a_1= 0$.	In this case, $$\varphi(z)=a_0  ~{\rm and}~ \psi(z)={ \beta(m)^2a_2 \over (m!)^2 }K^{[m]}_{w\overline{a_0}}(z).$$ Then
	\begin{align*}
		J_wD^\ast_{m,\psi,\varphi} K_\alpha(z)=&{\beta(m)^2 \bar{w}^ma_2 \over (m!)^2 } \sum_{n=m}^\infty{n!\over (n-m)!}{a_0^{n-m}\bar{w}^{n-m}z^{n}\over  \beta(n)^2  }\sum_{n=m}^\infty{n!\over (n-m)!}{a_0^{n-m}\bar{w}^{n-m}\alpha^{n}\over  \beta(n)^2  }\\
		=&	D_{m,\psi,\varphi}J_wK_\alpha(z).
		\end{align*}
	
	Case 3. $a_0=a_1=0$. In this case, $$\varphi(z)=0 ~{\rm and}~ \psi(z)={a_2\over m!}z^m.$$ Then $$
		J_wD^\ast_{m,\psi,\varphi} K_\alpha(z)= { a_2 \over  \beta(m)^2}(\bar{w}\alpha z)^m=	D_{m,\psi,\varphi}J_wK_\alpha(z).$$
	The proof is complete.
\end{proof}
	
	The following result obtains the condition on $\varphi$ so that $\varphi$ is an automorphism on $\DD$ and $D_{m,\psi,\varphi}$ is $J_w $-symmetric on $H^2(\beta)$.
	
	\begin{Theorem}
		Let $m\in \NN$, $\varphi$ be an automorphism on $\DD$   and $\psi\in H(\DD)$ be not identically zero such that $D_{m,\psi,\varphi}$ is $J_w $-symmetric  on $H^2(\beta)$. Then one of the following statements hold:
		\begin{enumerate}
			\item[(i)] $\varphi(z)=-\lambda z$ with $|\lambda|=1$ for some $\lambda\in \CC$.
			\item[(ii)] $$
			\varphi(z)={\bar{a}\beta(m+1)^2\beta(m+2)^2  \over a\bar{w}[(m+2)\beta(m+1)^4-(m+1)\beta(m)^2\beta(m+2)^2] }\cdot{ a-z \over 1-\bar{a}z }$$ for some $a\in \DD\backslash{0}$.
			\end{enumerate}
	\end{Theorem}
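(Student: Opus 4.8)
The plan is to combine Theorem \ref{the1} with the rigidity of disc automorphisms. Since $D_{m,\psi,\varphi}$ is $J_w$-symmetric, Theorem \ref{the1} forces $\varphi$ into the form (\ref{equ1}), with $a_0=\varphi(0)$ and $a_1=\varphi'(0)$. The key structural observation is that the auxiliary series $p$ and $q$ of (\ref{equ11}) and (\ref{equ26}) depend only on $a_0$ (besides the fixed data $m,w,\beta$), so (\ref{equ1}) determines $\varphi$ --- and hence every one of its Taylor coefficients --- from the two scalars $a_0$ and $a_1$ alone. Because $\varphi$ is an automorphism of $\DD$, its derivative never vanishes, so $a_1=\varphi'(0)\neq 0$. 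I would then split the argument according to whether $a_0=0$ or $a_0\neq 0$.

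When $a_0=0$, the series (\ref{equ11}) and (\ref{equ26}) collapse to their leading terms and (\ref{equ1}) reduces to $\varphi(z)=a_1z$. A map $z\mapsto a_1z$ is an automorphism of $\DD$ exactly when $|a_1|=1$, so setting $\lambda=-a_1$ gives conclusion (i) (equivalently, an automorphism fixing the origin is a rotation by the Schwarz lemma).

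When $a_0\neq 0$, the map $\varphi$ is an automorphism that does not fix the origin, hence $\varphi(z)=\eta\,\frac{a-z}{1-\bar a z}$ for some $\eta\in\CC$ with $|\eta|=1$ and some $a=\varphi^{-1}(0)\in\DD\setminus\{0\}$. Since (\ref{equ1}) is constructed so that $\varphi(0)=a_0$ and $\varphi'(0)=a_1$, the coefficients of $z^0$ and $z^1$ are automatically consistent and only restate $a_0=\eta a$ and $a_1=\eta(|a|^2-1)$; the decisive comparison therefore sits at order $z^2$. Expanding the Möbius map gives its second Taylor coefficient as $\tfrac12\varphi''(0)=\bar a\,a_1$, while (\ref{equ1}) gives that same coefficient as $\frac{\beta(m+1)^2a_1}{(m+1)\bar w^{m+1}\beta(m)^2}$ times the coefficient of $z^2$ in the quotient $q/p$. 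I would equate the two, cancel $a_1\neq 0$, and substitute $a_0=\eta a$ to solve for $\eta$.

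The single genuine computation is the second coefficient of $q/p$. From $p(z)=\frac{m!}{\beta(m)^2}+\frac{(m+1)!\,\bar w a_0}{\beta(m+1)^2}z+\cdots$ and $q(z)=\frac{(m+1)!\,\bar w^{m+1}}{\beta(m+1)^2}z+\frac{(m+2)!\,\bar w^{m+2}a_0}{\beta(m+2)^2}z^2+\cdots$, long division yields the coefficient of $z^2$ in $q/p$ as $\bar w^{m+2}a_0\big(\frac{(m+1)(m+2)\beta(m)^2}{\beta(m+2)^2}-\frac{(m+1)^2\beta(m)^4}{\beta(m+1)^4}\big)$. Feeding this into the order-$z^2$ identity collapses it to $\bar a=\bar w a_0\big(\frac{(m+2)\beta(m+1)^2}{\beta(m+2)^2}-\frac{(m+1)\beta(m)^2}{\beta(m+1)^2}\big)$; writing the bracket over the common denominator $\beta(m+1)^2\beta(m+2)^2$ and replacing $a_0=\eta a$ gives precisely the value of $\eta$, hence the expression for $\varphi$ in statement (ii). The main obstacle is exactly this bookkeeping for $q/p$; everything afterwards is elementary algebra. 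I would close by noting that the requirement $|\eta|=1$ then imposes a size condition on the weight $\beta$ governing whether case (ii) is non-vacuous, but this plays no role in the necessity being proved.
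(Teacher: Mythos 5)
Your proposal is correct and takes essentially the same route as the paper: both invoke Theorem \ref{the1}, equate the formula (\ref{equ1}) with the M\"obius form $\lambda\,\frac{a-z}{1-\bar a z}$, and read off the conclusion from the Taylor coefficients through order $z^2$, and your computed coefficient of $z^2$ in $q/p$ and the resulting value of the unimodular factor agree exactly with what the paper obtains from its equations (\ref{equ16}) and (\ref{equ15}). The only difference is organizational: the paper clears denominators and solves the resulting pair of coefficient equations for $(a_1,\lambda)$, while you note that the order-$0$ and order-$1$ comparisons are vacuous and that the single order-$2$ equation suffices after cancelling $a_1=\varphi'(0)\neq 0$ (justified by univalence, where the paper instead cancels $\lambda(|a|^2-1)\neq 0$, which is the same quantity).
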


\begin{proof}
  Since	$D_{m,\psi,\varphi}$ is $J_w $-symmetric on $H^2(\beta)$, Theorem \ref{the1} yields that $$
  \varphi(z)=a_0+{\beta(m+1)^2a_1q(z)  \over (m+1)\bar{w}^{m+1}\beta(m)^2p(z) },$$
  	where $ a_0=\varphi(0)$, $ a_1=\varphi'(0)$, $p(z)$ and $q(z)$ are defined as Theorem \ref{the1}. Since $\varphi$ is an automorphism on $\DD$, then there are $ a\in \DD$ and $\lambda\in \CC$ with $|\lambda|=1$ such that for any $z\in \DD$, $$
  	a_0+{\beta(m+1)^2a_1q(z)  \over (m+1)\bar{w}^{m+1}\beta(m)^2p(z) }=\lambda{  a-z\over 1-\bar{a}z },$$
  	which is equivalent to
  		\begin{equation}\label{equ14}
  		\begin{aligned}
  	 &(m+1)a_0\beta(m)^2\bar{w}^{m+1}p(z)-(m+1)a_0\bar{a}\beta(m)^2\bar{w}^{m+1}zp(z)\\&+a_1\beta(m+1)^2q(z)-a_1\bar{a}\beta(m+1)^2zq(z)\\=&(m+1)\lambda a\beta(m)^2 \bar{w}^{m+1}p(z)-(m+1)\lambda \beta(m)^2 \bar{w}^{m+1}zp(z)
  	 \end{aligned}
   \end{equation}
  	 for any $z\in \DD$.
  	 Considering the constant in (\ref{equ14}), we get
    $$a_0=\lambda a .$$
  Similarly, considering the coefficient of $z$ and $z^2$,   we get
  	 \begin{equation}\label{equ16}
  	 	\begin{aligned}
  	 & {(m+1)\beta(m)^2  \over \beta(m+1)^2 }\bar{w}^{m+2}a_0^2-\bar{a}\bar{w}^{m+1}a_0+\bar{w}^{m+1}a_1\\=&{ (m+1)\beta(m)^2  \over \beta(m+1)^2 }\bar{w}^{m+2}\lambda aa_0-\lambda \bar{w}^{m+1}
  	   \end{aligned}
    \end{equation}
  	  and
  	  \begin{equation}\label{equ15}
  	  \begin{aligned}
  	  &{ (m+1)(m+2)! \beta(m)^2  \over 2\beta(m+2)^2 }\bar{w}^{m+3}a_0^3- {(m+1)(m+1)!\beta(m)^2  \over \beta(m+1)^2 }\bar{w}^{m+2}\bar{a}a_0^2\\+&{(m+2)!\beta(m+1)^2  \over \beta(m+2)^2 }\bar{w}^{m+2}a_0a_1-(m+1)!\bar{w}^{m+1}\bar{a}a_1\\
  	  =&{ (m+1)(m+2)! \beta(m)^2  \over 2\beta(m+2)^2 }\bar{w}^{m+3}\lambda aa_0^2-{(m+1)(m+1)!\beta(m)^2  \over \beta(m+1)^2 }\bar{w}^{m+2}\lambda a_0
  	\end{aligned}
  \end{equation}
for any $w\in \DD$.	

  If $a=0$, then $a_0=\lambda a=0$. Therefore, (\ref{equ16}) deduces that $ a_1=-\lambda$, which implies that $$ p(z)={m!\over \beta(m)^2} ~{\rm and}~  q(z)={ (m+1)!\bar{w}^{m+1}z \over \beta(m+1)^2}.$$ Hence, $\varphi(z)=-\lambda z {\rm ~with~} |\lambda|=1.$

   If $ a\neq 0$, $a_0=\lambda a$ and (\ref{equ15}) give that $$
   a_1={ (m+1)\beta(m)^2\beta(m+2)^2\bar{w} \lambda^2a(|a|^2-1)  \over \beta(m+1)^2[(m+2)\beta(m+1)^2\bar{w}\lambda a-\beta(m+2)^2\bar{a}]  },$$
   which with (\ref{equ16}) yield that $$
    \lambda={\bar{a}\beta(m+1)^2\beta(m+2)^2  \over a\bar{w}[(m+2)\beta(m+1)^4-(m+1)\beta(m)^2\beta(m+2)^2] }.$$
    The proof is complete.
\end{proof}

As an application of Theorem \ref{the1}, we investigate the necessary and sufficient conditions for $J_w$-symmetric operator $ D_{m,\psi,\varphi}$ to be Hermitian  and normal. Recall that a bounded linear operator $T$ is   Hermitian if $T=T^*$.
An operator $ T $ on $\mathcal{H}$ is normal if and only if $TT^*=T^*T$, or for any $x\in \mathcal{H}$, $\|Tx\|=\|T^\ast x\|$.

\begin{Theorem}
	Let $m\in \NN$, $\varphi$ be an analytic self-map of $\DD$  and $\psi\in H(\DD)$ be not identically zero such that $ D_{m,\psi,\varphi}$ is bounded and   $J_w$-symmetric on $H^2(\beta)$. Then $ D_{m,\psi,\varphi}$ is Hermitian if and only if
 $$\psi ^{(m)}(0),\varphi'(0)\in \RR \, \,~~~~~~\mbox{and}\, \,~~~\overline{\varphi(0)}=\bar{w}\varphi(0).$$
	\end{Theorem}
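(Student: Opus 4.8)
The plan is to turn the Hermitian condition into a single functional identity on reproducing kernels by exploiting the $J_w$-symmetry. Since $D_{m,\psi,\varphi}=J_wD_{m,\psi,\varphi}^\ast J_w$ by hypothesis, and $J_w^2=I$, the operator is Hermitian (i.e.\ $D_{m,\psi,\varphi}=D_{m,\psi,\varphi}^\ast$) if and only if $D_{m,\psi,\varphi}^\ast=J_wD_{m,\psi,\varphi}^\ast J_w$. As $J_w$ is antilinear, the right-hand side is a bounded linear operator, so this identity may be tested on the total family $\{K_\alpha\}_{\alpha\in\DD}$; that is, it holds if and only if $D_{m,\psi,\varphi}^\ast K_\alpha=J_wD_{m,\psi,\varphi}^\ast J_w K_\alpha$ for every $\alpha\in\DD$.

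First I would record two elementary identities obtained by conjugating the power-series definitions and using $|w|=1$: namely $J_wK_\alpha=K_{w\bar{\alpha}}$ and $J_wK^{[m]}_\gamma=\bar{w}^{\,m}K^{[m]}_{w\bar{\gamma}}$. Combining these with Lemma \ref{lem1}, which gives $D_{m,\psi,\varphi}^\ast K_\alpha=\overline{\psi(\alpha)}K^{[m]}_{\varphi(\alpha)}$, a direct computation of $J_wD_{m,\psi,\varphi}^\ast J_w K_\alpha=\psi(w\bar{\alpha})\,\bar{w}^{\,m}K^{[m]}_{w\overline{\varphi(w\bar{\alpha})}}$ reduces the Hermitian condition to
$$\overline{\psi(\alpha)}\,K^{[m]}_{\varphi(\alpha)}=\bar{w}^{\,m}\,\psi(w\bar{\alpha})\,K^{[m]}_{w\overline{\varphi(w\bar{\alpha})}},\qquad \alpha\in\DD.$$

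Since $D_{m,\psi,\varphi}$ is $J_w$-symmetric, Theorem \ref{the1} forces one of its three cases, with the explicit forms of $\psi$ and $\varphi$ at hand. I would substitute each case into the displayed identity and match $z$-Taylor coefficients. In the two cases with $a_0=\varphi(0)=0$ one has $\varphi(z)=a_1z$ and $\psi(z)=\tfrac{a_2}{m!}z^m$, and the identity collapses to $\overline{a_2}\,\overline{a_1}^{\,n-m}=a_2\,a_1^{\,n-m}$ for all $n\ge m$; taking $n=m$ gives $a_2=\psi^{(m)}(0)\in\RR$, and then $n=m+1$ (using $a_2\neq0$) gives $a_1=\varphi'(0)\in\RR$, while $\overline{\varphi(0)}=\bar{w}\varphi(0)$ holds trivially as $\varphi(0)=0$.

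The main work is the constant-symbol case $a_0\neq0$, $a_1=0$, where $\varphi\equiv a_0$ and $\psi=\tfrac{\beta(m)^2a_2}{(m!)^2}K^{[m]}_{w\bar{a_0}}$, so the identity becomes $\overline{\psi(\alpha)}K^{[m]}_{a_0}=\bar{w}^{\,m}\psi(w\bar{\alpha})K^{[m]}_{w\bar{a_0}}$. The kernels $K^{[m]}_{a_0}$ and $K^{[m]}_{w\bar{a_0}}$ have $z$-coefficients proportional to $\bar{a_0}^{\,n-m}$ and $(\bar{w}a_0)^{\,n-m}$, so when $\overline{a_0}\neq\bar{w}a_0$ they are linearly independent and the identity would force $\psi\equiv0$, a contradiction; hence $\overline{\varphi(0)}=\bar{w}\varphi(0)$ is necessary. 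Under this condition the two kernels coincide, and a short computation using $|w|=1$ and $a_0=w\overline{a_0}$ reduces the remaining identity to $\overline{a_2}=a_2$, i.e.\ $\psi^{(m)}(0)\in\RR$ (with $\varphi'(0)=0\in\RR$ automatic). I expect this linear-independence step in the constant case to be the only real obstacle; once it is settled, each equivalence above is reversible, which yields both directions of the stated criterion simultaneously.
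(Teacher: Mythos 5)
Your proof is correct, and after the common first step it takes a genuinely different route from the paper's. Both arguments begin identically: using $J_w$-symmetry and Lemma \ref{lem1}, the Hermitian property is converted into a single kernel identity (the paper tests $J_wD_{m,\psi,\varphi}K_\alpha=D_{m,\psi,\varphi}J_wK_\alpha$ and expands everything in power series to get (\ref{equ18}); you test the equivalent identity $D_{m,\psi,\varphi}^\ast K_\alpha=J_wD_{m,\psi,\varphi}^\ast J_wK_\alpha$, packaged via $J_wK_\alpha=K_{w\bar\alpha}$ and $J_wK_\gamma^{[m]}=\bar w^{\,m}K^{[m]}_{w\bar\gamma}$, both of which check out). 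The divergence is in how the conditions on $\psi^{(m)}(0)$, $\varphi'(0)$, $\varphi(0)$ are then extracted. The paper keeps the general forms (\ref{equ1})--(\ref{equ2}) and manipulates the functional equation directly (setting $z=0$, comparing coefficients, differentiating $m$ times in $\alpha$), whereas you invoke the trichotomy from the converse half of Theorem \ref{the1} ($a_0=0$ or $a_1=0$), substitute the resulting explicit symbols, and finish each case by matching finitely many Taylor coefficients; the only nontrivial ingredient is the linear independence of $K^{[m]}_{a_0}$ and $K^{[m]}_{w\bar{a}_0}$ when $\overline{a_0}\ne\bar wa_0$, which is clear from the $z^m$ and $z^{m+1}$ coefficients. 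Your version is shorter, manifestly reversible, and more robust at one point: the paper deduces $\overline{\varphi(0)}=\bar w\varphi(0)$ from (\ref{equ19}) by cancelling $\psi(0)$, but Theorem \ref{the1} forces $\psi(0)=0$ whenever $m\ge1$, so (\ref{equ19}) is vacuous there and that cancellation is illegitimate (it is repairable, since $\overline{\psi(w\bar z)}=\bar w^{\,m}\psi(z)$ together with the explicit form of $\psi$ yields both $\psi^{(m)}(0)\in\RR$ and $\overline{\varphi(0)}=\bar w\varphi(0)$, because $\psi^{(m)}(0)\ne0$); your case analysis never divides by a possibly vanishing quantity. What the paper's route buys in exchange is the intermediate symbol relations $\overline{\psi(w\bar z)}=\bar w^{\,m}\psi(z)$ and $\overline{\varphi(w\bar z)}=\bar w\varphi(z)$, which have independent interest; your argument delivers only the stated criterion, but with less machinery.
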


\begin{proof} It is clear that $ D_{m,\psi,\varphi}$ is Hermitian if and only if $$D_{m,\psi,\varphi}K_\alpha(z)=D_{m,\psi,\varphi}^\ast K_\alpha(z) $$ for any $z,\alpha\in \DD $.
	Since $ D_{m,\psi,\varphi}$ is $J_w$-symmetric, then for any $z,\alpha\in \DD $, $$
		J_wD^\ast_{m,\psi,\varphi} K_\alpha(z)=	D_{m,\psi,\varphi}J_wK_\alpha(z).
$$
Therefore, $ D_{m,\psi,\varphi}$ is Hermitian if and only if for any $z,\alpha\in \DD $,
\begin{align}\label{equ17}
J_wD^\ast_{m,\psi,\varphi} K_\alpha(z)=J_wD_{m,\psi,\varphi} K_\alpha(z)=D_{m,\psi,\varphi}J_wK_\alpha(z).
\end{align}
Since
 \begin{align*}
	J_wD_{m,\psi,\varphi} K_\alpha(z)= &J_w{ \psi(z)}K_\alpha^{(m)}(\varphi(z))\\
	=&J_w{ \psi(z)}\sum_{n=m}^\infty{n!  \over(n-m)! }{ \bar{\alpha}^n\varphi(z)^{n-m} \over \beta(n)^2}\\
	=&\overline{ \psi(w\bar{z})}\sum_{n=m}^\infty{n!  \over(n-m)! }{ \alpha^n\overline{\varphi(w\bar{z})}^{n-m} \over  \beta(n)^2}
\end{align*}	
	and
\begin{align*}
	D_{m,\psi,\varphi}J_wK_\alpha(z)=	
	\psi(z)\sum_{n=m}^\infty{n!\over (n-m)!}{\varphi(z)^{n-m}(\bar{w}\alpha)^n \over  \beta(n)^2  }	
\end{align*}
for any $z,\alpha\in \DD$,		
then (\ref{equ17}) is equivalent to
\begin{align}\label{equ18}
\overline{ \psi(w\bar{z})}\sum_{n=m}^\infty{n!  \over(n-m)! }{ \alpha^n\overline{\varphi(w\bar{z})}^{n-m} \over  \beta(n)^2}=\psi(z)\sum_{n=m}^\infty{n!\over (n-m)!}{\varphi(z)^{n-m}(\bar{w}\alpha)^n \over  \beta(n)^2  }	
\end{align}
for any $z,\alpha\in \DD $. Letting $z=0$ in (\ref{equ18}), we have
\begin{align}\label{equ19}
	\overline{\psi(0)}\sum_{n=m}^\infty{n!\over (n-m)!}{ \overline{\varphi(0)}^{n-m}\alpha^n \over  \beta(n)^2}=\psi(0)\sum_{n=m}^\infty{n!\over (n-m)!}{\bar{w}^n\varphi(0)^{n-m} \alpha^n \over \beta(n)^2 }
	\end{align}
for any $\alpha\in \DD$.
Considering the coefficients of $\alpha^m$ and $\alpha^{m+1}$ respectively, we obtain that $$\overline{\psi(0)}=\bar{w}^m\psi(0)$$ and $$
 \overline{\psi(0)\varphi(0)}=\bar{w}^{m+1}\psi(0)\varphi(0),$$
which means that $$ \overline{\varphi(0)}=\bar{w}\varphi(0).$$
Therefore,
\begin{equation}\label{equ23}
	\begin{aligned}
	\overline{p(w\bar{z})}=&\sum_{n=m}^\infty{n!\over (n-m)!}{w^{n-m}\overline{\varphi(0)}^{n-m}(\bar{w}z)^{n-m}  \over  \beta(n)^2 }\\=&\sum_{n=m}^\infty{n!\over (n-m)!}{ w^{n-m}\bar{w}^{n-m}\varphi(0)^{n-m}(\bar{w}z)^{n-m} \over \beta(n)^2 }\\
	=&\sum_{n=1}^\infty{n!\over (n-m)!}{(\varphi(0)\bar{w}z)^{n-m}  \over \beta(n)^2 }\\=&p(z)
\end{aligned}	
\end{equation}
and
\begin{equation}\label{equ24}
	\begin{aligned}
	\overline{q(w\bar{z})}=&	\sum_{n={m+1}}^{\infty}{n!\over (n-m-1)!}{ w^n\overline{\varphi(0)}^{n-m-1}(\bar{w}z)^{n-m}  \over \beta(n)^2 }\\=&\sum_{n={m+1}}^{\infty}{n!\over (n-m-1)!}{ w^{m+n}\bar{w}^{n-m-1}{\varphi(0)}^{n-m-1}\bar{w}^nz^{n-m} \over \beta(n)^2 }\\
	=&\sum_{n={m+1}}^{\infty}{n!\over (n-m-1)!}{ w^{m+n}\bar{w}^{n-m-1}\bar{w}^{m+1}w^{m+1}{\varphi(0)}^{n-m-1}\bar{w}^nz^{n-m} \over \beta(n)^2 }\\=&w^{2m+1}q(z).
\end{aligned}	
\end{equation}
 Differentiating the equation $(\ref{equ18})$ $m$ times with respect to $\alpha$, we get
 \begin{align}\label{equ20}
\overline{ \psi(w\bar{z})}\sum_{n=m}^\infty{(n!)^2\over [(n-m)!]^2}{ \alpha^{n-m}\overline{ \varphi(w\bar{z})}^{n-m} \over \beta(n)^2  }=\psi(z)\sum_{n=m}^\infty{(n!)^2\over [(n-m)!]^2}{ \bar{w}^{n}\alpha^{n-m} \varphi(z)^{n-m}\over \beta(n)^2  }
\end{align}
for any $\alpha, z\in \DD$. Letting $\alpha=0$ in (\ref{equ20}), we have that $$\overline{ \psi(w\bar{z})}=\bar{w}^m\psi(z) $$ for any $z\in \DD$. Since  $ D_{m,\psi,\varphi}$ is $J_w$-symmetric, Theorem \ref{the1} yields that
\begin{align*}
&{\overline{\psi^{(m)}(0)}\beta(m)^2\over (m!)^2 }\sum_{n=m}^\infty{n!\over (n-m)!}{ w^{n-m}\overline{\varphi(0)}^{n-m}(\bar{w}z)^n \over \beta(n)^2 }\\
=&{\overline{\psi^{(m)}(0)}\beta(m)^2\over (m!)^2 }\sum_{n=m}^\infty{n!\over (n-m)!}{ \bar{w}^{n}\varphi(0)^{n-m}z^n \over \beta(n)^2 }\\
=&{{\psi^{(m)}(0)}\beta(m)^2\over (m!)^2 }\sum_{n=m}^\infty{n!\over (n-m)!}{ \bar{w}^{n}\varphi(0)^{n-m}z^n \over \beta(n)^2 },
\end{align*}
which implies
   $$\overline{\psi^{(m)}(0)}= \psi^{(m)}(0).$$
Therefore, by (\ref{equ20})
 \begin{align}\label{equ21}
 \bar{w}^m\sum_{n=m}^\infty{[n!]^2\over [(n-m)!]^2}{ \alpha^{n-m}\overline{ \varphi(w\bar{z})}^{n-m} \over \beta(n)^2  }=\sum_{n=m}^\infty{[n!]^2\over [(n-m)!]^2}{ \bar{w}^{n}\alpha^{n-m} \varphi(z)^{n-m}\over \beta(n)^2  }
 \end{align}
for any $\alpha, z\in \DD$. Considering the coefficient of $\alpha$ in (\ref{equ21}), we have
 $$ \overline{\varphi(w\bar{z})}=\bar{w}\varphi(z)$$ for any $z\in \DD$, which equivalent to,
\begin{equation}\label{equ27}
	\begin{aligned}
	\overline{\varphi(0)}+{\beta(m+1)^2\overline{\varphi'(0)}  \over (m+1)w^{m+1}\beta(m)^2 }{ \overline{q(w\bar{z})} \over\overline{p(w\bar{z})}  }=&\bar{w}\varphi(0)+{\beta(m+1)^2\overline{\varphi'(0)}  \over (m+1)w^{m+1}\beta(m)^2 }{ w^{2m+1}q(z) \over p(z) }\\
	=&\bar{w}\varphi(0)+{ \beta(m+1)^2 \overline{\varphi'(0)}w^m\bar{w}^{m+1} \over (m+1)\bar{w}^{m+1}\beta(m)^2 }{q(z)\over p(z)}\\=&\bar{w}\left( \varphi(0)+{ \beta(m+1)^2 \overline{\varphi'(0)} \over (m+1)\bar{w}^{m+1}\beta(m)^2 }{q(z)\over p(z)}\right)\\
	=&\bar{w}\left( \varphi(0)+{ \beta(m+1)^2 {\varphi'(0)} \over (m+1)\bar{w}^{m+1}\beta(m)^2 }{q(z)\over p(z)}\right),
\end{aligned}
\end{equation}
which implies that $\varphi'(0) \in \RR$.

Conversely, assume that $\psi ^{(m)}(0),\varphi'(0)\in \RR$ and $ \overline{\varphi(0)}=\bar{w}\varphi(0)$. Obviously, it is sufficient to verify that equation (\ref{equ18}) holds.
Since $ \overline{\varphi(0)}=\bar{w}\varphi(0)$ and
\begin{align*}
\overline{\psi(w\bar{z})}=&{\beta(m)^2\overline{\psi^{(m)}(0)}\over (m!)^2}\sum_{n=m}^\infty{n!\over (n-m)!}{w^{n-m}\overline{\varphi(0)}^{n-m}(\bar{w}z)^n  \over  \beta(n)^2 }\\=&{\beta(m)^2{\psi^{(m)}(0)}\over (m!)^2}\sum_{n=m}^\infty{n!\over (n-m)!}{{\varphi(0)}^{n-m}(\bar{w}z)^n  \over  \beta(n)^2 }\\=&\bar{w}^m\psi(z),
\end{align*}
 we see that (\ref{equ23}) and  (\ref{equ24}) hold. Thus  from (\ref{equ27}), we obtain that
\begin{align*}
	\overline{\varphi(w\bar{z})}=\bar{w}\varphi(z).
	\end{align*}
Therefore,
\begin{align*}
\overline{ \psi(w\bar{z})}\sum_{n=m}^\infty{n!  \over(n-m)! }{ \alpha^n\overline{\varphi(w\bar{z})}^{n-m} \over  \beta(n)^2}=&	\bar{w}^m\psi(z)\sum_{n=m}^\infty{n!  \over(n-m)! }{ \alpha^n\bar{w}^{n-m}\varphi(z)^{n-m} \over \beta(n)^2 }\\
=&\psi(z)\sum_{n=m}^\infty{n!  \over(n-m)! }{ \alpha^n\bar{w}^{n}\varphi(z)^{n-m} \over \beta(n)^2 }
\end{align*}
for any $z,\alpha\in \DD$. The proof is complete.	
\end{proof}

If $\varphi(0)=0$, the following result implies that every $J_w$-symmetric  operator $ D_{m,\psi,\varphi}$ is normal.

\begin{Theorem}
	Let $m\in \NN$, $\varphi$ be an analytic self-map of $\DD$ with $\varphi(0)=0$  and $\psi\in H(\DD)$ be not identically zero such that $ D_{m,\psi,\varphi}$ is bounded and $J_w$-symmetric  on $H^2(\beta)$. Then $ D_{m,\psi,\varphi}$ is normal.
\end{Theorem}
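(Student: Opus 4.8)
The plan is to exploit the rigidity that the hypothesis $\varphi(0)=0$ forces on $D_{m,\psi,\varphi}$ through Theorem \ref{the1}: once the symbols are pinned down to monomial form, normality becomes immediate because the operator turns out to be diagonal against the orthonormal basis $\{e_n\}$.

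First I would specialize Theorem \ref{the1} to $a_0=\varphi(0)=0$. Putting $a_0=0$ in the series (\ref{equ11}) and (\ref{equ26}) collapses each to a single surviving term: only $n=m$ survives in $p$ and only $n=m+1$ survives in $q$, giving $p(z)=m!/\beta(m)^2$ and $q(z)=(m+1)!\,\bar{w}^{m+1}z/\beta(m+1)^2$. Substituting these into (\ref{equ1}) and simplifying the ratio $q/p$ yields $\varphi(z)=a_1 z$ with $a_1=\varphi'(0)$. Likewise $K^{[m]}_{0}(z)=m!\,z^m/\beta(m)^2$, so (\ref{equ2}) reduces to $\psi(z)=(a_2/m!)z^m$ with $a_2=\psi^{(m)}(0)$. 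This is precisely Cases 1 and 3 in the proof of Theorem \ref{the1}, now unified under the single hypothesis $a_0=0$.

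Next I would compute the action on the basis. Since $e_n^{(m)}(u)=\frac{n!}{(n-m)!}u^{n-m}/\beta(n)$ for $n\ge m$ and $e_n^{(m)}\equiv 0$ for $n<m$, one gets
\[
D_{m,\psi,\varphi}e_n(z)=\frac{a_2}{m!}z^m\cdot\frac{n!}{(n-m)!}\frac{(a_1z)^{n-m}}{\beta(n)}=\frac{a_2\,n!}{m!(n-m)!}\,a_1^{n-m}\,e_n(z).
\]
Hence every $e_n$ is an eigenvector, $D_{m,\psi,\varphi}e_n=\lambda_n e_n$ with $\lambda_n=\frac{a_2\,n!}{m!(n-m)!}a_1^{n-m}$ for $n\ge m$ and $\lambda_n=0$ for $n<m$; in particular the matrix of $D_{m,\psi,\varphi}$ against $\{e_n\}$ is diagonal. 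The entries are bounded, since $|\lambda_n|=\|D_{m,\psi,\varphi}e_n\|\le\|D_{m,\psi,\varphi}\|$ by the boundedness assumption.

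Finally I would conclude normality. Taking adjoints in $D_{m,\psi,\varphi}e_n=\lambda_n e_n$ gives $D_{m,\psi,\varphi}^\ast e_n=\overline{\lambda_n}e_n$, so both $D_{m,\psi,\varphi}D_{m,\psi,\varphi}^\ast$ and $D_{m,\psi,\varphi}^\ast D_{m,\psi,\varphi}$ send $e_n$ to $|\lambda_n|^2 e_n$. Agreeing on an orthonormal basis, these two bounded operators coincide, whence $D_{m,\psi,\varphi}$ is normal. There is essentially no hard step here: the only point requiring care is the degeneration of $p$, $q$, and $K^{[m]}_0$ at $a_0=0$, after which the argument is the standard fact that a bounded operator which is diagonal in an orthonormal basis is normal.
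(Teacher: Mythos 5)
Your proof is correct and takes essentially the same route as the paper: both specialize Theorem \ref{the1} at $a_0=\varphi(0)=0$ (collapsing $p$, $q$, and $K^{[m]}_{0}$ to single terms) to get $\varphi(z)=a_1z$ and $\psi(z)=\frac{a_2}{m!}z^m$, and then verify normality against the orthonormal basis $\{e_n\}$. The only difference is in the closing step, and it is in your favor: the paper checks $\|D_{m,\psi,\varphi}e_j\|=\|D_{m,\psi,\varphi}^\ast e_j\|$ only on basis vectors and invokes the criterion ``$\|Tx\|=\|T^\ast x\|$ for all $x$'' (which, as stated, requires all vectors, not just a basis), whereas your observation that the operator is diagonal yields $D_{m,\psi,\varphi}D_{m,\psi,\varphi}^\ast=D_{m,\psi,\varphi}^\ast D_{m,\psi,\varphi}$ outright, since two bounded operators agreeing on an orthonormal basis coincide.
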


\begin{proof}
	 Obviously, $\varphi(0)=0$ gives  $$p(z)={m!\over \beta(m)^2} ~and~ q(z)={(m+1)!\bar{w}^{m+1}z \over \beta(m+1)^2 }.$$ Since $ D_{m,\psi,\varphi}$ is $J_w$-symmetric, Theorem \ref{the1} yields that  $$\varphi(z)=a_1z ~{\rm and}~ \psi(z)={a_2\over m!}z^m,$$ where $ a_1=\varphi'(0)$ and $a_2=\psi^{(m)}(0).$  Then for $j\in \NN^+$,
	 \begin{align*}
	 	\|D_{m,\psi,\varphi}e_j\|^2=&\sum_{n=0}^\infty|\langle D_{m,\psi,\varphi}e_j, e_n \rangle |^2\\
	 	=&\sum_{n=0}^\infty|\langle\psi e_j^{(m)}\circ\varphi , {z^n\over \beta(n)}\rangle |^2\\
	 	=&\sum_{n=0}^\infty|\langle  { j!a_2a_1^{j-m}z^j \over m!(j-m)! \beta(j)}, {z^n\over \beta(n)}\rangle |^2
	 	\end{align*}
 	and
 	\begin{align*}
 			\|D^\ast_{m,\psi,\varphi}e_j\|^2=&\sum_{n=0}^\infty|\langle  D^\ast_{m,\psi,\varphi}e_j, e_n\rangle |^2
 			=\sum_{n=0}^\infty|\langle e_j,D_{m,\psi,\varphi}e_n\rangle |^2\\
 		=&\sum_{n=0}^\infty|\langle {z^j\over \beta(j)},\psi e_n^{(m)}\circ\varphi \rangle |^2\\
 		=&\sum_{n=0}^\infty|\langle {z^j\over \beta(j)}, { n!a_2a_1^{n-m}z^n \over m!(n-m)! \beta(n)}\rangle |^2.
 	\end{align*}
 Therefore, for $j\in \NN^+$,
 $$
 \|D_{m,\psi,\varphi}e_j\|^2=	\|D^\ast_{m,\psi,\varphi}e_j\|^2=|a_2a_1^{j-m}|^2\left({j!\over m!(j-m)!}  \right)^2.$$
Hence $ D_{m,\psi,\varphi}$ is normal. The proof is complete.
\end{proof}

The following result finds a necessary and sufficient condition for a $J_w$-symmetric   operator $ D_{m,\psi,\varphi}$ to be normal.

\begin{Theorem}
	Let $m\in \NN$, $\varphi$ be an analytic self-map of $\DD$ with $\varphi'(0)=0$  and $\psi\in H(\DD)$ be not identically zero such that $ D_{m,\psi,\varphi}$ is bounded and  $J_w$-symmetric on $H^2(\beta)$. Then $ D_{m,\psi,\varphi}$ is normal if and only if $ \overline{\varphi(0)}=\bar{w}\varphi(0)$.
\end{Theorem}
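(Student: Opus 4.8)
The plan is to exploit the fact that the hypothesis $\varphi'(0)=0$ collapses $D_{m,\psi,\varphi}$ to a rank-one operator, for which normality is immediate to characterize. Since $a_1=\varphi'(0)=0$, the second term in (\ref{equ1}) vanishes and Theorem \ref{the1} forces $\varphi$ to be the constant map $\varphi\equiv a_0$, together with $\psi={\beta(m)^2a_2\over(m!)^2}K^{[m]}_{w\overline{a_0}}$, where $a_2=\psi^{(m)}(0)\neq 0$ because $\psi\not\equiv 0$. With $\varphi$ constant, the reproducing identity $f^{(m)}(a_0)=\langle f,K^{[m]}_{a_0}\rangle$ gives
\[
D_{m,\psi,\varphi}f=f^{(m)}(a_0)\,\psi=\langle f,K^{[m]}_{a_0}\rangle\,\psi ,
\]
so $D_{m,\psi,\varphi}$ is the rank-one map $f\mapsto\langle f,K^{[m]}_{a_0}\rangle\psi$, whose adjoint is $g\mapsto\langle g,\psi\rangle K^{[m]}_{a_0}$ (consistent with Lemma \ref{lem1}, since $\langle K_\alpha,\psi\rangle=\overline{\psi(\alpha)}$).

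Next I would settle normality directly from the criterion $\|D_{m,\psi,\varphi}f\|=\|D^\ast_{m,\psi,\varphi}f\|$ recalled before the theorem. A short computation gives
\[
\|D_{m,\psi,\varphi}f\|^2=|\langle f,K^{[m]}_{a_0}\rangle|^2\,\|\psi\|^2,\qquad
\|D^\ast_{m,\psi,\varphi}f\|^2=|\langle f,\psi\rangle|^2\,\|K^{[m]}_{a_0}\|^2 .
\]
Requiring these to agree for every $f\in H^2(\beta)$ forces $\psi$ and $K^{[m]}_{a_0}$ to be linearly dependent: if they were not, one could choose $f\perp\psi$ with $\langle f,K^{[m]}_{a_0}\rangle\neq 0$ to reach a contradiction, whereas $\psi=cK^{[m]}_{a_0}$ makes both sides equal. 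Hence $D_{m,\psi,\varphi}$ is normal if and only if $K^{[m]}_{w\overline{a_0}}$ and $K^{[m]}_{a_0}$ are proportional.

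Finally I would compare Taylor coefficients. Writing
\[
K^{[m]}_{w\overline{a_0}}(z)=\sum_{n=m}^\infty{n!\over(n-m)!}{(\bar{w}a_0)^{n-m}\over\beta(n)^2}z^n,\qquad
K^{[m]}_{a_0}(z)=\sum_{n=m}^\infty{n!\over(n-m)!}{\overline{a_0}^{\,n-m}\over\beta(n)^2}z^n ,
\]
the $z^m$-coefficients coincide, so any proportionality constant must equal $1$; then the $z^{m+1}$-coefficients yield $\bar{w}a_0=\overline{a_0}$, that is $\overline{\varphi(0)}=\bar{w}\varphi(0)$. Conversely, this single identity makes $(\bar{w}a_0)^{n-m}=\overline{a_0}^{\,n-m}$ for all $n$, so the two kernels coincide and $D_{m,\psi,\varphi}$ is normal. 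This gives the equivalence; the degenerate case $a_0=0$ is absorbed automatically, since then both kernels reduce to a multiple of $z^m$ and $\overline{\varphi(0)}=\bar{w}\varphi(0)$ holds trivially. The only genuinely conceptual step is the reduction to a rank-one operator and the attached normality criterion; once that is in place, the coefficient comparison is routine.
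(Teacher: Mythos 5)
Your proof is correct, but it takes a genuinely different and cleaner route than the paper. Both arguments begin identically: Theorem \ref{the1} with $\varphi'(0)=0$ forces $\varphi\equiv\varphi(0)=a_0$ and $\psi=\frac{\beta(m)^2a_2}{(m!)^2}K^{[m]}_{w\overline{a_0}}$ with $a_2=\psi^{(m)}(0)\neq0$. From there the paper stays entirely inside the reproducing-kernel calculus: it computes $D^\ast_{m,\psi,\varphi}D_{m,\psi,\varphi}K_\alpha$ and $D_{m,\psi,\varphi}D^\ast_{m,\psi,\varphi}K_\alpha$ as explicit products of three power series, equates them to get equation (\ref{equ25}), reads off $\overline{\varphi(0)}=\bar{w}\varphi(0)$ from the coefficient of $\bar{\alpha}^mz^{m+1}$, and disposes of the converse with an unexplicated ``simple calculation.'' You instead exploit the structural consequence of constancy of $\varphi$: the operator is the rank-one map $f\mapsto\langle f,K^{[m]}_{a_0}\rangle\psi$, and a rank-one operator $\langle\cdot,v\rangle u$ with $u,v\neq0$ is normal exactly when $u$ and $v$ are linearly dependent --- your perpendicularity argument for the forward direction is the right one, and both vectors are indeed nonzero here. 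This reduces the whole theorem to deciding when $K^{[m]}_{w\overline{a_0}}$ and $K^{[m]}_{a_0}$ are proportional, which the equal $z^m$-coefficients (forcing the constant to be $1$) and the $z^{m+1}$-coefficients settle at once, in both directions simultaneously. Your route buys transparency and economy: normality becomes a two-coefficient comparison of two kernels rather than a triple-series identity, and the converse requires no separate verification since it falls out of the same proportionality statement. The paper's route has only the mild advantage of reusing the kernel computations already set up for its other theorems and needing no abstract observation about rank-one operators; otherwise it is strictly heavier.
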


\begin{proof}
	Since $ D_{m,\psi,\varphi}$ is $J_w$-symmetric and $\varphi'(0)=0$, Theorem \ref{the1} deduces that
	 $$\varphi(z)=\varphi(0) {\rm ~and~}   	\psi(z)={ \psi^{(m)}(0)\beta(m)^2 \over (m!)^2 }K^{[m]}_{w\overline{\varphi(0)}}(z).$$ Since for any $f\in H^2(\beta)$, we have
	 \begin{align*}
	 	\left \langle  f,D_{m,\psi,\varphi}^\ast K^{[m]}_{\alpha}\right\rangle =&	\left \langle D_{m,\psi,\varphi}f, K^{[m]}_{\alpha}\right\rangle \\
	 	=&\psi^{(m)}(\alpha)f^{(m)}(\varphi(\alpha)) \\
	 	=&	\left \langle f, \overline{\psi^{(m)}(\alpha)}K^{[m]}_{\varphi(\alpha)}\right\rangle.
	 	\end{align*}
   Then
      $$
 	D_{m,\psi,\varphi}^\ast K^{[m]}_{\alpha}=\overline{\psi^{(m)}(\alpha)}K^{[m]}_{\varphi(\alpha)}$$ for any $\alpha\in \DD$. Hence,
	 for any $\alpha,z\in \DD$,  
	\begin{align*}
	&	D_{m,\psi,\varphi}^\ast D_{m,\psi,\varphi}K_\alpha(z)\\=&D_{m,\psi,\varphi}^\ast\psi(z)K^{(m)}_\alpha(\varphi(z))\\
		=&{ \psi^{(m)}(0)\beta(m)^2 \over (m!)^2 }\sum_{n=m}^\infty { n! \over (n-m)! }{ \bar{\alpha}^n \varphi(0)^{n-m}\over  \beta(n)^2}
		D_{m,\psi,\varphi}^\ast\psi(z)K^{[m]}_{w\overline{\varphi(0)}}(z)\\
		=&{ \psi^{(m)}(0)\beta(m)^2 \over (m!)^2 }\sum_{n=m}^\infty { n! \over (n-m)! }{ \bar{\alpha}^n \varphi(0)^{n-m}\over  \beta(n)^2}\overline{\psi^{(m)}(w\overline{\varphi(0)})}K^{[m]}_{{\varphi(0)}}(z)\\
		=&{ |\psi^{(m)}(0)|^2\beta(m)^4 \over (m!)^4 }\sum_{n=m}^\infty { n! \over (n-m)! }{ \bar{\alpha}^n \varphi(0)^{n-m}\over  \beta(n)^2}\\
		&\,\,\,\,\,\,\,\,\,\,\cdot\sum_{n=m}^\infty { (n!)^2 \over [(n-m)!]^2 }{ |\varphi(0)|^{2(n-m)} \over\beta(n)^2 }\sum_{n=m}^\infty{ n! \over (n-m)! }{ \overline{\varphi(0)}^{n-m}z^n \over \beta(n)^2}
		\end{align*}
	and
	\begin{align*}
		&D_{m,\psi,\varphi}D_{m,\psi,\varphi}^\ast K_\alpha(z)=	D_{m,\psi,\varphi}\overline{\psi(\alpha)}K^{[m]}_{{\varphi(\alpha)}}(z)\\
		=&\overline{\psi(\alpha)}\psi(z)\left( K^{[m]}_{{\varphi(\alpha)}}(z)\right)^{(m)}\circ\varphi(z)\\
		=&{ |\psi^{(m)}(0)|^2\beta(m)^4 \over (m!)^4 }\overline{K^{[m]}_{w\overline{\varphi(0)}}(\alpha)}K^{[m]}_{w\overline{\varphi(0)}}(z)\left( K^{[m]}_{{\varphi(\alpha)}}(z)\right)^{(m)}\circ\varphi(z)
		\end{align*}
		\begin{align*}
		=&{ |\psi^{(m)}(0)|^2\beta(m)^4 \over (m!)^4 }\sum_{n=m}^\infty { n! \over (n-m)! }{ \bar{w}^{n-m} \varphi(0)^{n-m}z^n\over  \beta(n)^2}\\
		&\,\,\,\,\,\,\,\,\,\,\cdot\sum_{n=m}^\infty{ n! \over (n-m)! }{ w^{n-m}\overline{\varphi(0)}^{n-m}\bar{a}^n \over \beta(n)^2}\sum_{n=m}^\infty { (n!)^2 \over [(n-m)!]^2 }{ |\varphi(0)|^{2(n-m)} \over\beta(n)^2 }.
\end{align*}
Therefore, $D_{m,\psi,\varphi}$ is normal if and only if
\begin{equation}\label{equ25}
	\begin{aligned}
	&\sum_{n=m}^\infty { n! \over (n-m)! }{ \bar{\alpha}^n \varphi(0)^{n-m}\over  \beta(n)^2}\sum_{n=m}^\infty{ n! \over (n-m)! }{ \overline{\varphi(0)}^{n-m}z^n \over \beta(n)^2}\\
	=&\sum_{n=m}^\infty { n! \over (n-m)! }{ \bar{w}^{n-m} \varphi(0)^{n-m}z^n\over  \beta(n)^2}\sum_{n=m}^\infty{ n! \over (n-m)! }{ w^{n-m}\overline{\varphi(0)}^{n-m}\bar{a}^n \over \beta(n)^2}
	\end{aligned}
\end{equation}
for any $z,\alpha\in \DD$. Considering the coefficient of $\bar{\alpha}^mz^{m+1}$ in (\ref{equ25}), we have $$\overline{\varphi(0)}=\bar{w}\varphi(0).$$

Conversely, assume that $\overline{\varphi(0)}=\bar{w}\varphi(0)$. By a simple calculation, equation (\ref{equ25}) holds. The proof is complete.
\end{proof}


\begin{thebibliography}{aa}


   \bibitem{cm} C. Cowen and B. MacCluer,  {\it Composition Operators on Spaces of Analytic Functions},  Studies in Advanced Mathematics,  CRC Press, Boca Raton, FL, 1995.

 \bibitem{djo} G. Datt, M. Jain and N. Ohri,   On weighted generalized composition operators on weighted Hardy spaces,  {\it Filomat } {\bf34}(5) (2020),   1689--1700.


  \bibitem{elm} T. Eklund, M. Lindstr$\ddot {\rm o}$m and P. Mleczko, A note on complex symmetric composition operators on the Bergman space $A^2(\DD)$,  {\it Funct. Approx. Comment. Math.} {\bf 59}(1) (2018), 129--139.
	
	\bibitem{f} M. Fatehi,  Complex symmetric weighted composition operators, {\it Complex Var. Elliptic Equ.} {\bf 64}(4) (2019), 710--720.
	
	\bibitem{fh} M. Fatehi and C. Hammond,   Composition-differentiation operators on the Hardy space, {\it Proc. Amer. Math. Soc.} {\bf 148}(7) (2020), 2893--2900.
	
	 \bibitem{fh2} M. Fatehi and C. Hammond, Normality and self-adjointness of weighted composition-differentiation operators, {\it Complex Anal. Oper. Theory} {\bf 15}(1) (2021), 1--13.
	
 \bibitem{gz} Y. Gao and  Z. Zhou, Complex symmetric composition operators induced by linear fractional maps, {\it Indiana Univ. Math. J.} {\bf 69}(2) (2020), 367--384.	
		
\bibitem{gh} S. Garcia and  C. Hammond,  Which weighted composition operators are complex symmetric?  {\it Concrete operators, spectral theory, operators in harmonic analysis and approximation}, 171--179, Oper. Theory Adv. Appl., 236, Birkh$\ddot{a}$user/Springer, Basel.

	\bibitem{gpp} S. Garcia,   E. Prodan  and M. Putinar,  Mathematical and physical aspects of complex symmetric operators, {\it J. Phys. A} {\bf 47} (2014),  533--538.

	 \bibitem{gp} S. Garcia and  M. Putinar,  Complex symmetric operators and applications, {\it Trans. Amer. Math. Soc.} {\bf 358}(3) (2006),  1285--1315.
	
	\bibitem{gp2} S. Garcia and  M. Putinar, Complex symmetric operators and applications, II, {\it Trans. Amer. Math. Soc.} {\bf 359}(8) (2007), 3913--3931.
	
	\bibitem{gw} S. Garcia and  W. Wogen,   Complex symmetric partial isometries, {\it J. Funct. Anal.} {\bf 257}(4) (2009), 1251--1260.
	
	\bibitem{gw2} S. Garcia and  W. Wogen, Some new classes of complex symmetric operators, {\it Trans. Amer. Math. Soc.} {\bf 362} (11) (2010), 6065--6077.

	  \bibitem{gm} A. Gupta and A. Malhotra,  Complex symmetric weighted composition operators on the space $H_1^2(\DD)$, {\it Complex Var. Elliptic Equ.} {\bf 65}(9) (2020), 1488--1500.
	
	\bibitem{hw} K. Han and  M. Wang,   Weighted composition-differentiation operators on the Hardy space, {\it Banach J. Math. Anal.} {\bf 15}(3) (2021), 1--18.
	
	\bibitem{hw2} K. Han and M. Wang,  Weighted composition-differentiation operators on the Bergman space, {\it Complex Anal. Oper. Theory} {\bf 15}(5) (2021), Paper No. 89, 17 pp.
	
	 \bibitem{jhz} C. Jiang, S. Han  and Z. Zhou,   Complex symmetric weighted composition operators on the Hardy space, {\it Czechoslovak Math. J.} {\bf 70}(3) (2020), 817--831.
	
	\bibitem{jkkl}  S. Jung, Y. Kim, E. Ko and J. Lee, Complex symmetric weighted composition operators on $H^{2}(\mathbb{D})$, {\it J. Funct. Anal.}  {\bf 267} (2014), 323--351.
	
	\bibitem{lk} R. Lim and L. Khoi,   Complex symmetric weighted composition operators on $H_\gamma(\DD)$, {\it J. Math. Anal. Appl.} {\bf 464}(1) (2018),  101--118.
	
	\bibitem{lpx} J. Liu,  S. Ponnusamy and H. Xie, Complex symmetric weighted composition-differentiation operators, {\it Linear Mult. Algebra}  (2022),        https://doi.org/10.1080/03081087.2022.2043816.
	
\bibitem{mg} A. Malhotra and A. Gupta,  Complex symmetry of generalized weighted composition operators on Fock space, {\it J. Math. Anal. Appl.} {\bf 495}(2) (2021), Paper No. 124740, 12 pp.

\bibitem{mg2} A. Malhotra and A. Gupta, Complex symmetric weighted composition operators on weighted Hardy space, {\it  Adv. Pure   Appl. Math.} {\bf 13}(1) (2022), 39--49.

	\bibitem{ns} S. Narayan,   D. Sievewright  and D. Thompson,  Complex symmetric composition operators on $H^2$, {\it J. Math. Anal. Appl.} {\bf 443}(1) (2016), 625--630.
	
	\bibitem{nst} S. Narayan,   D. Sievewright  and M. Tjani,  Complex symmetric composition operators on weighted Hardy spaces, {\it Proc. Amer. Math. Soc.} {\bf 148}(5) (2020), 2117--2127.
	
	\bibitem{tmh} D. Thompson, T. McClatchey  and C. Holleman,   Binormal, complex symmetric operators, {\it Linear Mult. Algebra}  {\bf 69} (2021),   1705--1715.
	
	\bibitem{y} X. Yao,  Complex symmetric composition operators on a Hilbert space of Dirichlet series, {\it J. Math. Anal. Appl.} {\bf 452} (2017), 1413--1419.
	
\bibitem{zhang} Y. Zhang,   Essential norm of generalized weighted composition operators from $H^\infty$ to the logarithmic Bloch space, {\it J. Integral Equations Appl.} {\bf  31}(1) (2019), 131--147.

	\bibitem{zz} H. Zhou and  Z. Zhou, Normal complex symmetric weighted composition operators on the Hardy space, 	{\it J. Korean Math. Soc.} {\bf 58}(4)(2021), 799--817.
	
 \bibitem{zxl1}  X. Zhu, Products of differentiation, composition and multiplication from Bergman type spaces to Bers type space, {\it Integ. Tran. Spec. Funct.} {\bf 18} (2007), 223--231.

\bibitem{zxl2} X. Zhu,   Generalized weighted composition operators on weighted Bergman spaces,  {\it Numer. Funct. Anal. Opt.} {\bf 30} (2009), 881--893.

\bibitem{zxl3} X. Zhu, Generalized weighted composition operators on Bloch-type spaces,  {\it J. Ineq. Appl.} {\bf 2015} (2015), 59--68.

\bibitem{zxl4} X. Zhu, Essential norm of generalized weighted composition operators on Bloch-type spaces, {\it Appl. Math. Comput.} {\bf 274} (2016), 133--142.

\bibitem{zxl5} X. Zhu, Generalized weighted composition operators on weighted Bergman spaces, II,  {\it  Math. Inequal. Appl.} {\bf 22} (3) (2019),   1055--1066.


\end{thebibliography}
\end{document}